\theoremstyle{plain}
 \newtheorem{thm}{Theorem}[section]
 \newtheorem{prop}[thm]{Proposition}
 \newtheorem{lem}[thm]{Lemma}
\theoremstyle{definition}
 \newtheorem{dfn}[thm]{Definition}
 \newtheorem{rem}[thm]{Remark}
 \numberwithin{equation}{section}
\theoremstyle{definition}
\theoremstyle{remark}
 \numberwithin{equation}{section}
\renewcommand{\leq}{\leqslant}
\renewcommand{\ge}{\geqslant}\renewcommand{\geq}{\geqslant}
\newcommand{\bbC}{\mathbb{C}}
\newcommand{\bbF}{\mathbb{F}}
\renewcommand{\and}{\quad \mbox{and} \quad}  
\renewcommand{\leq}{\leqslant}
\renewcommand{\ge}{\geqslant}\renewcommand{\geq}{\geqslant}
\title{Twisting formula of epsilon factors}
\subjclass[2010]{11S37 (11F70, 22E50).}
\keywords{Local field, Gauss sum, Jacobi sum, Epsilon factor, Conductor}
\author[Biswas]{\bfseries Sazzad Ali Biswas}
\address{
Chennai Mathematical Institute\\ 
H1, Sipcot It Park, Siruseri-603103 \\ 
India}
\email{sabiswas@cmi.ac.in, sazzad.jumath@gmail.com}
\thanks{The author is partially supported by IMU-Berlin Einstein Foundation, Berlin, Germany and CSIR, India } 
\begin{document}

\vspace{10mm}
\setcounter{page}{1}
\thispagestyle{empty}

\begin{abstract}
For characters of a non-Archimedean local field we have explicit formula for epsilon factors. But in general, 
we do not have
any generalized twisting formula of epsilon factors. In this paper we give a generalized twisting formula of 
epsilon factors via local Jacobi sums.

\end{abstract}

\maketitle

\section{\textbf{Introduction}}

By Langlands, we can associate a local epsilon factor (also known as local constant) with each character 
$\chi:F^\times\to\bbC^\times$ 
of a non-Archimedean local field $F$ of characteristic zero. 
 In general, we do not have any explicit formula of epsilon factor of a 
 twisted character (by a ramified character). 
 Let $\chi_1$ and $\chi_2$ be two characters of $F^\times$. 
 Let $\psi:F\to\bbC^\times$ be a nontrivial additive character of $F$. If any one of these characters is unramified, 
 then we have a
 formula for $\epsilon(\chi_1\chi_2,\psi)$ due to Tate. Also if conductor, $a(\chi_1)\geq 2 a(\chi_2)$, 
 then by Deligne (cf. \cite{D1}, Lemma 4.16) we have a formula for $\epsilon(\chi_1\chi_2,\psi)$. 
 In this article we give a
 generalized
 twisting formula for $\epsilon(\chi_1\chi_2,\psi)$, when both $\chi_1$ and $\chi_2$ are ramified via the following 
 local Jacobi sums.

 Let $U_F$ be the group of units in $O_F$ (ring of integers of $F$). 
For characters $\chi_1$, $\chi_2$ of $F^\times$ and a positive integer $n$, we define the 
local Jacobi sum:
\begin{equation}
  J_t(\chi_1,\chi_2,n)=\sum_{\substack{x\in \frac{U_F}{U_{F}^{n}}\\t-x\in U_F}}\chi_{1}^{-1}(x)\chi_{2}^{-1}(t-x).
 \end{equation}
 In our twisting formula,
conductors of characters play an important role and the formula (cf. Theorem \ref{Theorem 5.1}) is:
  \begin{align}
 \epsilon(\chi_1\chi_2,\psi)
 =\begin{cases}
   \frac{q^{\frac{n}{2}}\epsilon(\chi_1,\psi)\epsilon(\chi_2,\psi)}{J_1(\chi_1,\chi_2,n)} & \text{when $n=m=r$},\\ 
    \frac{q^{\frac{r}{2}}\chi_1\chi_2(\pi_{F}^{r-n})\epsilon(\chi_1,\psi)\epsilon(\chi_2,\psi)}{J_1(\chi_1,\chi_2,n)} & \text{when $n=m>r$},\\
     \frac{q^{n-\frac{m}{2}}\epsilon(\chi_1,\psi)\epsilon(\chi_2,\psi)}{J_1(\chi_1,\chi_2,n)} & \text{when $n=r>m$},\\
   \end{cases}
\end{align}
  where $n=a(\chi_1), m=a(\chi_2)$, $r=a(\chi_1\chi_2)$ and $q$ is the cardinality of the residue field of the field 
  $F$.

\section{\textbf{Notations and Preliminaries}}

Let $F$  be a non-Archimedean local field of characteristic zero
, i.e., a finite extension of the field $\mathbb{Q}_p$ (field of $p$-adic numbers),
where $p$ is a prime.
Let $O_F$ be the 
ring of integers in local field $F$ and $P_F=\pi_F O_F$ is the unique prime ideal in $O_F$ 
and $\pi_F$ is a uniformizer, i.e., an element in $P_F$ whose valuation is one i.e.,
 $\nu_F(\pi_F)=1$.
The cardinality of the residue field $\bbF_q=O_F/P_F$
of $F$ is $q$, i.e., $|\bbF_q|=q$. Let $U_F=O_F-P_F$ be the group of units in $O_F$.
Let $P_{F}^{i}=\{x\in F:\nu_F(x)\geq i\}$ and for $i\geq 0$ define $U_{F}^{i}=1+P_{F}^{i}$
(with proviso $U_{F}^{0}=U_F=O_{F}^{\times}$).

\begin{dfn}[\textbf{Conductor of characters}]
 
The conductor of any nontrivial additive character $\psi$ of a field  is an integer $n(\psi)$ if $\psi$ is trivial
on $P_{F}^{-n(\psi)}$, but nontrivial on $P_{F}^{-n(\psi)-1}$. 
We also consider $a(\chi)$ as the conductor of 
 nontrivial character $\chi: F^\times\to \mathbb{C}^\times$, i.e., $a(\chi)$ is the smallest integer $m\geq 0$ such 
 that $\chi$ is trivial
 on $U_{F}^{m}$. We say $\chi$ is {\bf unramified} if the conductor of $\chi$ is zero and otherwise {\bf ramified}.
  We also recall here that for two characters $\chi_1$ and $\chi_2$ of $F^\times$ we have 
 $a(\chi_1\chi_2)\leq\mathrm{max}(a(\chi_1),a(\chi_2))$ with equality if $a(\chi_1)\neq a(\chi_2)$. 
\end{dfn}

\subsection{Classical Gauss sums and Jacobi sums}

Let $k_q$ be a finite field of order $q$. Let $\chi, \psi$ be a multiplicative and an additive character respectively of $k_q$. 
Then the classical Gauss sum $G(\chi,\psi)$ is defined by 
\begin{equation}
 G(\chi,\psi)=\sum_{x\in k_{q}^{\times}}\chi(x)\psi(x).
\end{equation}

Let $\chi_1$ and $\chi_2$ be two multiplicative character of $k_{q}$. The classical Jacobi sum $J_1(\chi_1,\chi_2)$ is defined by 
\begin{equation}
 J_1(\chi_1,\chi_2)=\sum_{x\in k_{q}^{\times}}\chi_1(x)\chi_2(1-x).
\end{equation}

The relation between classical Gauss sums and Jacobi sums is as follows (cf. \cite{BRK}, p. 59, Theorem 2.1.3)\\
\begin{enumerate}
 \item Let $\chi_1, \chi_2$ be two multiplicative character of $k_q$ and $\psi$  a nontrivial
 additive character of $k_q$. If $\chi_1\chi_2$ is nontrivial, then 
 \begin{equation}
  J_1(\chi_1,\chi_2)=\frac{G(\chi_1,\psi)\cdot G(\chi_2,\psi)}{G(\chi_1\chi_2,\psi)}.
 \end{equation}
\item If $\chi_1,\chi_2$ and $\chi_1\chi_2$ are all nontrivial, then we have 
\begin{equation}\label{eqn 2.4}
 |J_1(\chi_1,\chi_2)|=q^{\frac{1}{2}}.
\end{equation}

\end{enumerate}

\subsection{\textbf{Epsilon factors}}

For a nontrivial multiplicative character $\chi$ of $F^\times$ and nontrivial additive character $\psi$ of $F$,
we have (cf. \cite{RL}, p. 4)
\begin{equation}\label{label1}
 \epsilon(\chi,\psi,c)=\chi(c)\frac{\int_{U_F}\chi^{-1}(x)\psi(x/c) dx}{|\int_{U_F}\chi^{-1}(x)\psi(x/c) dx|}
\end{equation}
where the Haar measure $dx$ on $F$ is normalized such that the Haar measure $m'$ of $O_F$ is $1$, i.e., $m'(O_F)=1$, and 
 $c\in F^\times$ with valuation $n(\psi)+a(\chi)$.
To get modified \textbf{summation formula}
of epsilon factor from the integral formula (\ref{label1}), we need the next lemma which can be found in \cite{GGS}.

\begin{lem}\label{Lemma 3.1}
Let $F$ be a non-Archimedean local field with $q$ as the cardinality of the residue field of $F$.
Let $\chi$ be a nontrivial character of $F^\times$ with conductor $a(\chi)$. Let $\psi$ be an additive character of 
$F$ with conductor $n(\psi)$. We define the integration for an integer $m\in \mathbb{Z}$:
\begin{equation}
 I(m)=\int_{U_F}\chi^{-1}(x)\psi(\frac{x}{\pi_{F}^{l+m}})dx,\quad\text{where $l=a(\chi)+n(\psi)$}.
\end{equation}
Then
\begin{align}
 |I(m)|=\begin{cases}
         q^{-\frac{a(\chi)}{2}} & \text{when $m=0$,}\\
         0 & \text{otherwise.}
        \end{cases}
\end{align}
\end{lem}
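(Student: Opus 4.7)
The plan is to handle the three cases $m > 0$, $m < 0$, and $m = 0$ separately. In the first two cases I will exhibit a substitution in $U_F$ that scales the integrand of $I(m)$ by a non-trivial constant, thereby forcing $I(m)=0$; in the third case I will identify $I(0)$ with a classical Gauss sum of known absolute value.

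For $m \geq 1$, I choose $y \in P_F^{a(\chi)}$ with $v_F(y) = a(\chi)+m-1$ and $\psi(-y/\pi_F^{l+m}) \neq 1$. Such a $y$ exists because $-y/\pi_F^{l+m}$ has valuation $-n(\psi)-1$, and $\psi$ is by definition non-trivial on $P_F^{-n(\psi)-1}$. The additive substitution $x \mapsto x-y$ preserves $U_F$ (since $y \in P_F$), leaves $\chi^{-1}(x)$ unchanged thanks to $\chi^{-1}(x-y) = \chi^{-1}(x)\chi^{-1}(1-y/x) = \chi^{-1}(x)$ (using $y/x \in P_F^{a(\chi)}$), and multiplies $\psi(x/\pi_F^{l+m})$ by the constant $\psi(-y/\pi_F^{l+m})$. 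Hence $I(m) = \psi(-y/\pi_F^{l+m})\,I(m)$, giving $I(m)=0$. Dually, for $m \leq -1$, I set $u = 1+v$ with $v_F(v) = a(\chi)-1$ chosen so that $\chi^{-1}(u) \neq 1$; this is possible by the very definition of the conductor $a(\chi)$. The multiplicative substitution $x \mapsto ux$ now introduces an extra factor $\psi(vx/\pi_F^{l+m})$ inside the integral, and this factor equals $1$ because $v_F(vx/\pi_F^{l+m}) \geq (a(\chi)-1) - (l+m) = -n(\psi)-1-m \geq -n(\psi)$ for $m \leq -1$. Therefore $I(m) = \chi^{-1}(u)\,I(m)$, again forcing $I(m)=0$.

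For $m=0$, I partition $U_F = \bigsqcup_{x_0 \in U_F/U_F^{a(\chi)}} x_0 U_F^{a(\chi)}$; since $\chi^{-1}$ is constant on each coset and $\psi$ is trivial on $P_F^{-n(\psi)}$, the inner integral over $x_0 U_F^{a(\chi)}$ evaluates directly to $q^{-a(\chi)}\psi(x_0/\pi_F^l)$. This reduces $I(0)$ to $q^{-a(\chi)}$ times the classical Gauss sum $\sum_{x_0 \in U_F/U_F^{a(\chi)}} \chi^{-1}(x_0)\,\psi(x_0/\pi_F^l)$, whose absolute value is the standard $q^{a(\chi)/2}$. That magnitude is itself obtained by a direct $|S|^2$ calculation: after the change of variable $x = yz$ the double sum decouples into an inner additive-character sum in $y$ which vanishes unless $z=1$, leaving only the diagonal contribution of size $(q-1)q^{a(\chi)-1}$. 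Combining gives $|I(0)| = q^{-a(\chi)} \cdot q^{a(\chi)/2} = q^{-a(\chi)/2}$. The only non-formal step is this classical Gauss sum bound in the matched case $m=0$; the two vanishing cases are direct consequences of conductor-based translation invariance in $U_F$, the matching condition for the existence of the required $y$ (resp.\ $v$) being precisely $m \geq 1$ (resp.\ $m \leq -1$).
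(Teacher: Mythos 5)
Your treatment of the two vanishing cases is correct and genuinely different from the paper's: where the paper splits $I(m)$ into an iterated integral whose inner integral over $U_F^{a(\chi)+m}$ (for $m<0$) or over $U_F^{a(\chi)+m-r}$ (for $m>0$) is shown to vanish, you instead exhibit a symmetry of $U_F$ --- the translation $x\mapsto x-y$ with $\nu_F(y)=a(\chi)+m-1$, respectively the unit multiplication $x\mapsto ux$ with $\nu_F(u-1)=a(\chi)-1$ --- that multiplies $I(m)$ by a constant $\neq 1$. The valuation bookkeeping there is exactly right (granted the standing assumption $a(\chi)\geq 1$, i.e.\ $\chi$ ramified, which the paper's proof also uses implicitly and without which the lemma fails anyway), and this route is arguably cleaner than the paper's.

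The gap is in the case $m=0$, which you yourself identify as the only non-formal step. The reduction $I(0)=q^{-a(\chi)}\sum_{x_0\in U_F/U_F^{a(\chi)}}\chi^{-1}(x_0)\psi(x_0/\pi_F^{l})$ is fine, but your sketch of the $|S|^2$ computation is not: after the substitution $x=yz$, the inner sum $\sum_{y\in U_F/U_F^{a(\chi)}}\psi\bigl(y(z-1)/\pi_F^{l}\bigr)$ does \emph{not} vanish for all $z\neq 1$, because $y$ runs over units rather than over all of $O_F/P_F^{a(\chi)}$. It equals $(q-1)q^{a(\chi)-1}$ if $z\in U_F^{a(\chi)}$, equals $-q^{a(\chi)-1}$ if $z\in U_F^{a(\chi)-1}\setminus U_F^{a(\chi)}$, and vanishes only for the remaining $z$; already for $a(\chi)=1$ it is the sum of a nontrivial additive character over $\mathbb{F}_q^{\times}$, which is $-1$, not $0$. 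Keeping only the diagonal as you propose would give $|G|^2=(q-1)q^{a(\chi)-1}$, i.e.\ $|I(0)|=q^{-a(\chi)/2}(1-q^{-1})^{1/2}$, contradicting the lemma. The correct evaluation writes the inner sum as a difference of full sums over $O_F/P_F^{a(\chi)}$ and $O_F/P_F^{a(\chi)-1}$ (this is the $\varphi(x)=A-B$ step in the paper) and then invokes $\sum_{z\in U_F^{a(\chi)-1}/U_F^{a(\chi)}}\chi^{-1}(z)=0$ --- the point where the conductor being exactly $a(\chi)$ enters --- so that the off-diagonal terms contribute $+q^{a(\chi)-1}$ and one gets $|G|^2=(q-1)q^{a(\chi)-1}+q^{a(\chi)-1}=q^{a(\chi)}$, hence $|I(0)|=q^{-a(\chi)/2}$. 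With that correction your argument goes through; as written, the central case is unproved.
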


Again since $m'(O_F)=1$, we have $m'(U_F^n)=q^{-n}$ for any positive integer $n$. By using the above 
Lemma \ref{Lemma 3.1}, 
 the formula (\ref{label1}) can be reduced to 
 \begin{align}
  \epsilon(\chi,\psi,c)\nonumber
  &=\chi(c)q^{a(\chi)/2}\sum_{x\in\frac{U_F}{U_{F}^{a(\chi)}}}\chi^{-1}(x)\psi(x/c)m'(U_{F}^{a(\chi)})\\
  &=\chi(c)q^{-a(\chi)/2}\sum_{x\in\frac{U_F}{U_{F}^{a(\chi)}}}\chi^{-1}(x)\psi(x/c),\label{eqn 3.11}
 \end{align}
where $c=\pi_{F}^{a(\chi)+n(\psi)}$.

Now if $u\in U_F$ is unit and replace $c=cu$, then we have 
\begin{equation}
 \epsilon(\chi,\psi,cu)=\chi(c)q^{-\frac{a(\chi)}{2}}\sum_{x\in\frac{U_F}{U_{F}^{a(\chi)}}}\chi^{-1}(x/u)\psi(x/cu)=\epsilon(\chi,\psi,c).
\end{equation}
Therefore $\epsilon(\chi,\psi,c)$ \textbf{depends} only on the exponent $\nu_{F}(c)=a(\chi)+n(\psi)$. Therefore we can 
simply write $\epsilon(\chi,\psi, c)=\epsilon(\chi,\psi)$, because $c$ is determined by 
$\nu_F(c)=a(\chi)+n(\psi)$ up to a unit $u$ which has \textbf{no influence on} $\epsilon(\chi,\psi,c)$.
If $\chi$ is unramified, i.e., $a(\chi)=0$, therefore $\nu_F(c)=n(\psi)$. Then from the formula of $\epsilon(\chi,\psi,c)$, we can write
\begin{equation}\label{eqn 3.15}
 \epsilon(\chi,\psi,c)=\chi(c),
\end{equation}
and therefore $\epsilon(1,\psi,c)=1$ if $\chi=1$ is the trivial character.

\subsection{\textbf{Known twisting formula of abelian epsilon factors:}}

\begin{enumerate}
 \item If $\chi_1$ and $\chi_2$ are two unramified characters of $F^\times$ and $\psi$ be 
 a nontrivial additive character of $F$, then we have from equation (\ref{eqn 3.15})
 \begin{equation}
  \epsilon(\chi_1\chi_2,\psi)=\epsilon(\chi_1,\psi)\epsilon(\chi_2,\psi).
 \end{equation}
 \item  Let $\chi_1$ and $\chi_2$ be ramified and unramified character of $F^\times$ respectively, 
 then (cf. \cite{JT2}, (3.2.6.3))
\begin{equation}
 \epsilon(\chi_1\chi_2,\psi)=\chi_2(\pi_F)^{a(\chi_1)+n(\psi)}\cdot \epsilon(\chi_1,\psi).
\end{equation}

\item 
 We also have a twisting formula of epsilon factor by Deligne (cf. \cite{D1}, Lemma 4.16)
 under some special condition and which is in the following:\\
Let $\alpha$ and $\beta$ be two multiplicative characters of a local field $F$ such that $a(\alpha)\geq 2\cdot a(\beta)$.
Let $\psi$ be an additive character of $F$.
Let $y_{\alpha,\psi}$ be an element of $F^\times$ such that 
$$\alpha(1+x)=\psi(y_{\alpha,\psi}x)$$
for all $x\in F$ with valuation $\nu_F(x)\geq\frac{a(\alpha)}{2}$ (if $a(\alpha)=0$, $y_{\alpha,\psi}=\pi_{F}^{-n(\psi)}$). Then 
\begin{equation}\label{eqn 3.26}
 \epsilon(\alpha\beta,\psi)=\beta^{-1}(y_{\alpha,\psi})\cdot \epsilon(\alpha,\psi).
\end{equation}

\end{enumerate}

\section{ \textbf{Generalized twisting formula of epsilon factors}}

\subsection{\textbf{Local Gauss sum}}

Let $m$ be a nonzero positive integer.
Let $\chi$ be a nontrivial multiplicative character of $F$ with conductor $a(\chi)$ and $\psi:F\to\mathbb{C}^\times$ 
be an additive character of $F$ with conductor $n(\psi)$. We define the local character sum of a character
$\chi$:
\begin{equation}\label{eqn 4.1}
 G(\chi,\psi,m)=\sum_{x\in\frac{U_F}{U_{F}^{m}}}\chi^{-1}(x)\psi(x/c),
\end{equation}
where $c=\pi_{F}^{a(\chi)+n(\psi)}$. When $m=a(\chi)$, we call $G(\chi,\psi,a(\chi))$  
as the \textbf{local Gauss sum} of character $\chi$.

Now let conductor of $\chi$ be $1$, i.e., $\chi:F/U_{F}^{1}\to\bbC^\times$. Hence $\chi|_{U_F}$ is a 
character of $U_F/U_{F}^{1}$. If the conductor of $n(\psi)$ is $-1$, i.e., $\psi:F/P_F\to\bbC^\times$,
then $\psi|_{O_F}$ is an additive character of $O_F/P_F$. Moreover, when $a(\chi)=1$ and $n(\psi)=-1$, we have 
$c=\pi_{F}^{a(\chi)+n(\psi)}=\pi_{F}^{1-1}=1$, and hence equation (\ref{eqn 4.1}) reduces to the classical
Gauss sum, i.e., $G(\chi,\psi,1)=G(\tilde{\chi},\psi')$, where $\tilde{\chi}:=\chi^{-1}|_{U_F}$ and $\psi':=\psi|_{O_F}$.

\begin{prop}\label{Proposition 4.1}
 The definition of local Gauss sum $G(\chi,\psi, a(\chi))$ does not depend on the choice of the coset 
representatives of $U_F$ mod $U_{F}^{a(\chi)}$.
\end{prop}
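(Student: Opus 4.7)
The plan is to show that for each coset in $U_F/U_F^{a(\chi)}$ the summand $\chi^{-1}(x)\psi(x/c)$ is genuinely well-defined, i.e. depends only on the coset and not on the particular representative $x$. Once that is verified term by term, the sum is obviously independent of the system of coset representatives.

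First I would pick two representatives $x$ and $x' = xu$ of the same coset, where $u \in U_F^{a(\chi)}$, and compare the two summands. The multiplicative factor is handled immediately: since $a(\chi)$ is by definition the smallest integer such that $\chi$ is trivial on $U_F^{a(\chi)}$, we have $\chi^{-1}(u) = 1$, hence $\chi^{-1}(x') = \chi^{-1}(x)$.

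The substantive step is to check that $\psi(x'/c) = \psi(x/c)$. Writing $u = 1 + v$ with $v \in P_F^{a(\chi)}$, one has
\begin{equation*}
\frac{x'}{c} = \frac{x}{c} + \frac{xv}{c},
\end{equation*}
and since $\psi$ is additive, it suffices to show $\psi(xv/c) = 1$. For this I would compute the valuation: as $x \in U_F$, $v \in P_F^{a(\chi)}$, and $c = \pi_F^{a(\chi)+n(\psi)}$,
\begin{equation*}
\nu_F\!\left(\tfrac{xv}{c}\right) \geq 0 + a(\chi) - \bigl(a(\chi)+n(\psi)\bigr) = -n(\psi).
\end{equation*}
Thus $xv/c \in P_F^{-n(\psi)}$, and by the definition of the conductor $n(\psi)$ the character $\psi$ is trivial on $P_F^{-n(\psi)}$, giving $\psi(xv/c) = 1$ as needed.

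Combining these two observations yields $\chi^{-1}(x')\psi(x'/c) = \chi^{-1}(x)\psi(x/c)$, so the individual summand is an honest function on $U_F/U_F^{a(\chi)}$ and the total sum depends only on the coset system, not on chosen representatives. There is no real obstacle here; the only thing one has to be careful about is balancing the exponents of $\pi_F$ in the valuation computation, which is forced precisely by the normalization $\nu_F(c) = a(\chi) + n(\psi)$ built into the definition.
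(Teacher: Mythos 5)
Your proof is correct and follows essentially the same route as the paper: both replace a representative $x$ by $xu$ with $u\in U_F^{a(\chi)}$, use $\chi^{-1}(u)=1$, and observe that $\tfrac{x}{c}(u-1)\in P_F^{-n(\psi)}$ so that $\psi\bigl(\tfrac{x}{c}(u-1)\bigr)=1$. Your valuation computation just makes explicit the step the paper states directly.
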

\begin{proof}
It is very easy to see from the definition of local Gauss sum.
If we change one of the coset representatives $x$ to $xu$ where $u\in U_{F}^{a(\chi)}$ in $G(\chi,\psi, a(\chi))$ and we have 
\begin{align*}
 G(\chi,\psi,a(\chi))
 &=\sum_{x\in\frac{U_F}{U_{F}^{a(\chi)}}}\chi^{-1}(x)\psi(x/c)\\
 &=\sum_{x\in\frac{U_F}{U_{F}^{a(\chi)}}}\chi^{-1}(xu)\psi(xu/c)\quad\text{replacing $x$ by $xu$, $u\in U_{F}^{a(\chi)}$}\\
 &=\sum_{x\in\frac{U_F}{U_{F}^{a(\chi)}}}\chi^{-1}(x)\psi(x/c)\psi(\frac{x}{c}(u-1))\\
 &=\sum_{x\in\frac{U_F}{U_{F}^{a(\chi)}}}\chi^{-1}(x)\psi(x/c).
\end{align*}
 Since $P_{F}^{-n(\psi)}$ is a fractional ideal of $O_F$, then $\frac{x}{c}(u-1)\in P_{F}^{-n(\psi)}$ for $x\in U_F$.
 Therefore $\psi(\frac{x}{c}(u-1))=1$ for all $x\in U_F$ and $u\in U_{F}^{a(\chi)}$. This proves the local 
 Gauss sum is independent 
 of the choice of the coset representatives of $U_F$ mod $U_{F}^{a(\chi)}$.
 Therefore definition of local Gauss sum does not depend on the choice of the coset representatives of $x$. 
\end{proof}
 
In the next proposition we compute the absolute value of $G(\chi,\psi,a(\chi))$ by using Lemma \ref{Lemma 3.1}.
\begin{prop}\label{Proposition 4.2}
 If $\chi$ is a ramified character of $F^\times$, then 
\begin{equation*}
 |G(\chi,\psi,a(\chi))|=q^{\frac{a(\chi)}{2}}.
\end{equation*}
\end{prop}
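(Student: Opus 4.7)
The plan is to reduce the statement directly to Lemma \ref{Lemma 3.1}. The key observation is that the local Gauss sum $G(\chi,\psi,a(\chi))$ and the integral $I(0)$ appearing in that lemma differ only by a factor equal to the measure of the subgroup $U_F^{a(\chi)}$.

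First I would unwind the integral $I(0) = \int_{U_F} \chi^{-1}(x)\psi(x/\pi_F^{l})\,dx$ (with $l = a(\chi)+n(\psi)$) by partitioning $U_F$ into cosets modulo $U_F^{a(\chi)}$. On each coset $xU_F^{a(\chi)}$ the character $\chi^{-1}$ is constant (since $a(\chi)$ is exactly the conductor) and the additive character $\psi(\cdot/\pi_F^l)$ is also constant: indeed, for $u \in U_F^{a(\chi)}$, one has $\tfrac{x(u-1)}{\pi_F^l} \in P_F^{-n(\psi)}$, so $\psi$ takes the value $1$ there. This gives exactly
\[
I(0) = m'(U_F^{a(\chi)}) \sum_{x \in U_F/U_F^{a(\chi)}} \chi^{-1}(x)\psi(x/\pi_F^l) = q^{-a(\chi)}\, G(\chi,\psi,a(\chi)),
\]
using the measure computation $m'(U_F^{a(\chi)}) = q^{-a(\chi)}$ from the proof of Lemma \ref{Lemma 3.1}.

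Now I would simply take absolute values on both sides and apply the case $m=0$ of Lemma \ref{Lemma 3.1}, which states $|I(0)| = q^{-a(\chi)/2}$. Rearranging gives
\[
|G(\chi,\psi,a(\chi))| = q^{a(\chi)}\,|I(0)| = q^{a(\chi)} \cdot q^{-a(\chi)/2} = q^{a(\chi)/2},
\]
as desired. There is no real obstacle here: all the hard work (the orthogonality argument for $\sum_{y \in U_F^{a(\chi)-1}/U_F^{a(\chi)}} \chi^{-1}(y) = 0$ and the Gauss-sum cancellation) has already been carried out inside Lemma \ref{Lemma 3.1}. The only point one has to be a little careful about is the normalization of the Haar measure and the passage from an integral to a finite sum, which is routine since $\chi^{-1}$ and the relevant additive character are locally constant on the cosets of $U_F^{a(\chi)}$.
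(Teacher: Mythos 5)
Your argument is correct and is essentially identical to the paper's own proof: both express $I(0)=m'(U_F^{a(\chi)})\,G(\chi,\psi,a(\chi))=q^{-a(\chi)}G(\chi,\psi,a(\chi))$ and then invoke the case $m=0$ of Lemma \ref{Lemma 3.1} to get $|G(\chi,\psi,a(\chi))|=q^{a(\chi)/2}$. Your extra remark justifying the constancy of $\psi(\cdot/\pi_F^l)$ on cosets of $U_F^{a(\chi)}$ is a harmless (and welcome) elaboration of the same step.
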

\begin{proof}
We can write 
\begin{equation}
 \int_{U_F}\chi^{-1}(x)\psi(x/c)dx=\sum_{x\in\frac{U_F}{U_{F}^{a(\chi)}}}\chi^{-1}(x)\psi(x/c)\times m'(U_{F}^{a(\chi)}).
 \label{label5}
\end{equation}
where $c=\pi_{F}^{a(\chi)+n(\psi)}$ and $m'$ is the Haar measure which is normalized so that 
$m'(O_F)=1$. Now from equation (\ref{label5}) we have 
\begin{align*}
 | \int_{U_F}\chi^{-1}(x)\psi(x/c)dx|
 &=|\sum_{x\in\frac{U_F}{U_{F}^{a(\chi)}}}\chi^{-1}(x)\psi(x/c)|\times|m'(U_{F}^{a(\chi)})|\\
 &=|G(\chi,\psi,a(\chi))|q^{-a(\chi)},
\end{align*}
since $m'(U_{F}^{a(\chi)})=q^{-a(\chi)}$.
Therefore from Lemma \ref{Lemma 3.1} we have 
\begin{equation}
 |G(\chi,\psi,a(\chi))|=q^{\frac{a(\chi)}{2}}.
\end{equation}
\end{proof}

Furthermore, from Lemma \ref{Lemma 3.1} it can be proved that 
\begin{equation}
 \sum_{x\in\frac{U_F}{U_{F}^{a(\chi)}}}\chi^{-1}(x)\psi(\frac{x}{\pi_{F}^{l+m}})=0,
\end{equation}
for nonzero integers $m\neq 0$ and $l=a(\chi)+n(\psi)$.

In the next lemma, we see the relation between two local character sums $G(\chi,\psi,n_1 )$, $G(\chi,\psi, n_2)$, 
where $n_1>n_2$ and here we mean 
\begin{equation*}
 G(\chi,\psi,n_i)=\sum_{x\in\frac{U_F}{U_{F}^{n_i}}}\chi^{-1}(x)\psi(x/c),\quad\text{$i=1,2$}
\end{equation*}
 where $c=\pi_{F}^{a(\chi)+n(\psi)}$. 
\begin{lem}\label{Lemma 4.3}
\begin{equation}
 G(\chi,\psi,n_1)=q^{m}G(\chi,\psi,n_2).
\end{equation}
where $m=n_1-n_2$.
\end{lem}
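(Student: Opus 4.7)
The plan is to decompose the larger quotient $U_F/U_F^{n_1}$ as a disjoint union indexed by $U_F/U_F^{n_2}$, with each fiber isomorphic to $U_F^{n_2}/U_F^{n_1}$, and then show that the inner sum over each fiber is constantly $q^m$ times the value at the base point. This rests on the (implicit) hypothesis that $n_2 \geq a(\chi)$, without which $G(\chi,\psi,n_2)$ is not even well-defined as a sum over cosets.

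First I would fix a complete set of coset representatives $\{x\}$ for $U_F/U_F^{n_2}$ together with a complete set $\{y\}$ for $U_F^{n_2}/U_F^{n_1}$, so that $\{xy\}$ exhausts $U_F/U_F^{n_1}$. Rewriting the defining sum, one gets
\begin{equation*}
G(\chi,\psi,n_1) = \sum_{x \in U_F/U_F^{n_2}} \sum_{y \in U_F^{n_2}/U_F^{n_1}} \chi^{-1}(xy)\,\psi\!\left(\tfrac{xy}{c}\right).
\end{equation*}
Because $n_2 \geq a(\chi)$, the character $\chi^{-1}$ is trivial on $U_F^{n_2}$, so $\chi^{-1}(xy) = \chi^{-1}(x)$ and factors out of the inner sum.

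Next, writing each $y = 1 + z$ with $z \in P_F^{n_2}/P_F^{n_1}$, the inner sum becomes
\begin{equation*}
\psi\!\left(\tfrac{x}{c}\right) \sum_{z \in P_F^{n_2}/P_F^{n_1}} \psi\!\left(\tfrac{xz}{c}\right).
\end{equation*}
Here $\nu_F(xz/c) \geq n_2 - a(\chi) - n(\psi) \geq -n(\psi)$, so $xz/c \in P_F^{-n(\psi)}$, and hence $\psi(xz/c) = 1$. Therefore the inner $z$-sum collapses to $|P_F^{n_2}/P_F^{n_1}| = q^{n_1 - n_2} = q^m$.

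Putting everything together gives
\begin{equation*}
G(\chi,\psi,n_1) = q^m \sum_{x \in U_F/U_F^{n_2}} \chi^{-1}(x)\,\psi\!\left(\tfrac{x}{c}\right) = q^m \, G(\chi,\psi,n_2),
\end{equation*}
as desired. The main (minor) obstacle is to verify the valuation estimate that forces $\psi(xz/c) = 1$; everything else is coset bookkeeping analogous to Proposition~\ref{Proposition 4.1}.
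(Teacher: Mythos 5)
Your proposal is correct, but it is organized differently from the paper's argument. The paper proves the lemma by evaluating the Haar integral $\int_{U_F}\chi^{-1}(x)\psi(x/c)\,dx$ in two ways: since the integrand is constant on cosets of $U_F^{n_i}$, the integral equals $G(\chi,\psi,n_i)\cdot m'(U_{F}^{n_i})=q^{-n_i}G(\chi,\psi,n_i)$ for $i=1,2$ (the same device as in the proof of Lemma \ref{Lemma 3.1}, with the normalization $m'(O_F)=1$), and comparing the two expressions yields the factor $q^{n_1-n_2}$. You instead stay entirely inside the finite sums: you refine $U_F/U_{F}^{n_1}$ over $U_F/U_{F}^{n_2}$ with fibers $U_{F}^{n_2}/U_{F}^{n_1}$, check that the summand is constant on each fiber (triviality of $\chi$ on $U_{F}^{n_2}$ together with the valuation estimate $\nu_F(xz/c)\ge n_2-a(\chi)-n(\psi)\ge -n(\psi)$), and count $|U_{F}^{n_2}/U_{F}^{n_1}|=q^{m}$. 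The underlying fact is the same, namely constancy of $x\mapsto\chi^{-1}(x)\psi(x/c)$ on $U_{F}^{n_2}$-cosets, but your version avoids Haar measure altogether and has the merit of making explicit the hypothesis $n_2\ge a(\chi)$, which the paper uses only tacitly when writing the integral as a sum over $U_F/U_{F}^{n_2}$, and which does hold in the one place the lemma is applied (Case 2 of Theorem \ref{Theorem 5.1}, where $n_2=a(\chi_1\chi_2)$). One shared caveat: both arguments also implicitly need $n_2\ge 1$, since $m'(U_{F}^{n_2})=q^{-n_2}$ and $|U_{F}^{n_2}/U_{F}^{n_1}|=q^{n_1-n_2}$ both fail when $n_2=0$.
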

\begin{proof}
  From Lemma \ref{Lemma 3.1}, it is straight forward. We have 
  \begin{align*}
   \int_{U_F}\chi^{-1}(x)\psi(\frac{x}{c})dx
   &=\sum_{x\in\frac{U_F}{U_{F}^{n_1}}}\chi^{-1}(x)\psi(\frac{x}{c})\times m'(U_{F}^{n_1})\\
   &=q^{-n_1}\sum_{x\in\frac{U_F}{U_{F}^{n_1}}}\chi^{-1}(x)\psi(\frac{x}{c}),\quad\text{since $m'(U_{F}^{n_1})=q^{-n_1}$}\\
   &=q^{-n_1}G(\chi,\psi,n_1).
  \end{align*}
Similarly, we can express
\begin{equation*}
 \int_{U_F}\chi^{-1}(x)\psi(\frac{x}{c})dx=q^{-n_2}G(\chi,\psi,n_2).
\end{equation*}

 Comparing these two above equations we obtain
 \begin{equation*}
  G(\chi,\psi,n_1)=q^{n_1-n_2}G(\chi,\psi,n_2)=q^{m}G(\chi,\psi,n_2).
 \end{equation*}

\end{proof}

\subsection{\textbf{Local Jacobi sum}}

Let $\chi_1$ and $\chi_2$ be two nontrivial characters of $F^\times$. 
For any $t\in \frac{U_F}{U_{F}^{n}}$, where 
 $n\geq1$, we define the local Jacobi sum for characters $\chi_1$ and $\chi_2$:
 \begin{equation}
  J_t(\chi_1,\chi_2,n)=\sum_{\substack{x\in \frac{U_F}{U_{F}^{n}}\\t-x\in U_F}}\chi_{1}^{-1}(x)\chi_{2}^{-1}(t-x).
 \end{equation}
 When $n=1$, $t=1$, and conductors $a(\chi_1)=a(\chi_2)=1$, this local Jacobi sum is nothing but the classical Jacobi sum for 
 the characters $\chi_{1}^{-1}$ and $\chi_{2}^{-1}$, i.e., 
 $J_1(\chi_1,\chi_2,1)=J_1(\chi_{1}^{-1},\chi_{2}^{-1})$.

\begin{prop}\label{Proposition 4.4} 
\begin{equation}
 J_1(\chi_1,\chi_2,n)=\chi_1\chi_2(t)\cdot J_t(\chi_1,\chi_2,n), \quad\text{for any $t\in \frac{U_F}{U_{F}^{n}}$}.
\end{equation}
\end{prop}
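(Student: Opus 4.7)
The plan is to prove the identity by a direct change of variable, substituting $x = ty$ so that the sum defining $J_t$ gets rescaled to the sum defining $J_1$, with the characters factoring out exactly the factor $(\chi_1\chi_2)^{-1}(t)$.

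First I would note that since $t \in U_F$, multiplication by $t$ is a bijection of $U_F$ onto itself, and it descends to a bijection of the quotient $U_F/U_F^n$ (because $U_F^n$ is a subgroup, hence $tu \equiv tu' \pmod{U_F^n}$ iff $u \equiv u' \pmod{U_F^n}$). So as $y$ ranges over coset representatives of $U_F/U_F^n$, so does $x = ty$. Next I would examine how the constraint $t - x \in U_F$ transforms: $t - x = t - ty = t(1-y)$, and since $t \in U_F$, this lies in $U_F$ if and only if $1 - y \in U_F$. Thus the index set $\{x \in U_F/U_F^n : t-x \in U_F\}$ maps bijectively under $y \mapsto ty$ onto $\{y \in U_F/U_F^n : 1-y \in U_F\}$.

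Then I would apply the multiplicativity of $\chi_1^{-1}$ and $\chi_2^{-1}$ to compute
\begin{align*}
J_t(\chi_1,\chi_2,n)
&=\sum_{\substack{y\in U_F/U_F^n \\ 1-y\in U_F}} \chi_1^{-1}(ty)\,\chi_2^{-1}(t(1-y))\\
&=\chi_1^{-1}(t)\chi_2^{-1}(t)\sum_{\substack{y\in U_F/U_F^n \\ 1-y\in U_F}} \chi_1^{-1}(y)\,\chi_2^{-1}(1-y)\\
&=(\chi_1\chi_2)^{-1}(t)\cdot J_1(\chi_1,\chi_2,n).
\end{align*}
Multiplying both sides by $\chi_1\chi_2(t)$ yields the claimed identity.

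There is no substantial obstacle; the only point that requires a little care is checking that the change of variable $x = ty$ is well defined on $U_F/U_F^n$ and that it preserves the constraint set $\{t - x \in U_F\}$, which boils down to the fact that $t$ is a unit. Everything else is immediate from the multiplicativity of characters.
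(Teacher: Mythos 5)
Your proof is correct and follows essentially the same route as the paper: the change of variable $x = ty$ (the paper writes $s = x/t$) combined with multiplicativity of the characters, which pulls out the factor $(\chi_1\chi_2)^{-1}(t)$. Your extra remarks verifying that multiplication by the unit $t$ is well defined on $U_F/U_F^n$ and preserves the constraint $t-x\in U_F$ are a careful spelling-out of what the paper leaves implicit.
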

\begin{proof}
 For any $t\in \frac{U_F}{U_{F}^{n}}$, from the definition of Jacobi sum, we have
 \begin{align*}
J_t(\chi_1,\chi_2,n)
&=\sum_{\substack{x\in \frac{U_F}{U_{F}^{n}}\\t-x\in U_F}}\chi_{1}^{-1}(x)\chi_{2}^{-1}(t-x)\\
&=\sum_{\substack{x/t\in \frac{U_F}{U_{F}^{n}}\\1-x/t\in U_F}}(\chi_1\chi_2)^{-1}(t)\chi_{1}^{-1}(x/t)\chi_{2}^{-1}(1-x/t)\\
&=(\chi_1\chi_2)^{-1}(t)\sum_{\substack{s=x/t\in \frac{U_F}{U_{F}^{n}}\\1-s\in U_F}}\chi_{1}^{-1}(s)\chi_{2}^{-1}(1-s)\\
&=(\chi_1\chi_2)^{-1}(t)J_1(\chi_1,\chi_2,n).
\end{align*}
Therefore 
\begin{equation*}
J_1(\chi_1,\chi_2,n)=\chi_1\chi_2(t)\cdot J_t(\chi_1,\chi_2,n), \quad\text{for any $t\in \frac{U_F}{U_{F}^{n}}$}.
\end{equation*}

\end{proof}

In the following theorem, we give a generalized twisting formula of epsilon factors via the above local Jacobi sums.

\begin{thm}\label{Theorem 5.1} 
Let $F$ be a non-Archimedean local field with $q$ as the cardinality of the residue field of $F$.
Let $\psi$ be a nontrivial additive character of $F$.
Let $\chi_1$ and $\chi_2$ be two ramified 
characters of $F^\times$ with conductors $n$ and $m$ respectively. Let $r$ be the conductor of character $\chi_1\chi_2$. Then
\begin{align}\label{eqn 4.8}
 \epsilon(\chi_1\chi_2,\psi)
 =\begin{cases}
   \frac{q^{\frac{n}{2}}\epsilon(\chi_1,\psi)\epsilon(\chi_2,\psi)}{J_1(\chi_1,\chi_2,n)} & \text{when $n=m=r$},\\ 
    \frac{q^{\frac{r}{2}}\chi_1\chi_2(\pi_{F}^{r-n})\epsilon(\chi_1,\psi)\epsilon(\chi_2,\psi)}{J_1(\chi_1,\chi_2,n)} & \text{when $n=m>r$},\\
     \frac{q^{n-\frac{m}{2}}\epsilon(\chi_1,\psi)\epsilon(\chi_2,\psi)}{J_1(\chi_1,\chi_2,n)} & \text{when $n=r>m$},\\
   \end{cases}
\end{align}
\end{thm}
\begin{proof}
We know that the formula of epsilon of a character $\chi$ of $F^\times$ is:
\begin{align}\label{eqn 4.9}
 \epsilon(\chi,\psi)=\chi(c)q^{-\frac{a(\chi)}{2}}\sum_{x\in\frac{U_F}{U_{F}^{a(\chi)}}}\chi^{-1}(x)\psi(x/c)=
 \chi(c)q^{-\frac{a(\chi)}{2}}G(\chi,\psi,a(\chi)),
\end{align}
where $c=\pi_{F}^{a(\chi)+n(\psi)}$.

 Now we divide this proof into three cases.\\
 \textbf{Case-1: When $n=m=r$.} By the definition of epsilon factor, in this case we have 
 \begin{equation}
 \epsilon(\chi_1,\psi)=\chi_{1}(c_1)q^{-\frac{n}{2}}\sum_{x\in\frac{U_F}{U_{F}^{n}}}\chi_{1}^{-1}(x)\psi(x/c_1),\label{label7}
\end{equation}
 and 
  \begin{equation}
 \epsilon(\chi_2,\psi)=\chi_{2}(c_2)q^{-\frac{m}{2}}\sum_{y\in\frac{U_F}{U_{F}^{m}}}\chi_{2}^{-1}(y)\psi(y/c_2),\label{label8}
\end{equation}
 where $c_1=\pi_{F}^{n+n(\psi)}$ and $c_2=\pi_{F}^{m+n(\psi)}$. Since $n=m$, then we consider
 $c=c_1=c_2$.
 
Now from equations (\ref{label7}) and (\ref{label8}) we have
 \begin{align}
\epsilon(\chi_1,\psi)\epsilon(\chi_2,\psi)\nonumber
&=q^{-n}\chi_{1}\chi_{2}(c)\sum_{x\in\frac{U_F}{U_{F}^{n}}}\chi_{1}^{-1}(x)\psi(x/c)\times 
\sum_{y\in\frac{U_F}{U_{F}^{m}}}\chi_{2}^{-1}(y)\psi(y/c)\\\nonumber
&=q^{-n}\chi_{1}\chi_{2}(c)\sum_{x, y\in\frac{U_F}{U_{F}^{n}}}\chi_{1}^{-1}(x)\chi_{2}^{-1}(y)\psi(x/c)\psi(y/c)\\\nonumber
&=q^{-n}\chi_{1}\chi_{2}(c)\sum_{x, y\in\frac{U_F}{U_{F}^{n}}}\chi_{1}^{-1}(x)\chi_{2}^{-1}(y)\psi(\frac{x+y}{c})\\\nonumber
&=q^{-n}\chi_{1}\chi_{2}(c)\sum_{\substack{x\in\frac{U_F}{U_{F}^{n}}\\t-x\in \frac{U_F}{U_{F}^{n}}}}
\chi_{1}^{-1}(x)\chi_{2}^{-1}(t-x)\psi(\frac{t}{c})\\\nonumber
&=q^{-n}\chi_{1}\chi_{2}(c)\sum_{a=0}^{n-1}\{\sum_{u\in \frac{U_F}{U_{F}^{n-a}}}\{\sum_{x\in\frac{U_F}{U_{F}^{n}}}\chi_{1}^{-1}(\frac{x}{\pi_{F}^{a}u})
\chi_{2}^{-1}(1-\frac{x}{\pi_{F}^{a}u})\}
(\chi_1\chi_2)^{-1}(\pi_{F}^{a}u)\psi(\frac{\pi_{F}^{a}u}{c})\}\\\nonumber
&=q^{-n}\chi_{1}\chi_{2}(c)\sum_{a=0}^{n-1}\{\sum_{u\in \frac{U_F}{U_{F}^{n-a}}}\{\sum_{s=x/u\in\frac{U_F}{U_{F}^{n-a}}}\chi_{1}^{-1}(\frac{s}{\pi_{F}^{a}})
\chi_{2}^{-1}(1-\frac{s}{\pi_{F}^{a}})\}
(\chi_1\chi_2)^{-1}(\pi_{F}^{a}u)\psi(\frac{\pi_{F}^{a}u}{c})\}\\\nonumber
&=q^{-n}\chi_{1}\chi_{2}(c)\sum_{a=0}^{n-1}\{\sum_{s\in\frac{U_F}{U_{F}^{n-a}}}\chi_{1}^{-1}(\frac{s}{\pi_{F}^{a}})
\chi_{2}^{-1}(1-\frac{s}{\pi_{F}^{a}})\times\sum_{u\in \frac{U_F}{U_{F}^{n-a}}}
(\chi_1\chi_2)^{-1}(\pi_{F}^{a}u)\psi(\frac{\pi_{F}^{a}u}{c})\}\\
&=q^{-n}\chi_{1}\chi_{2}(c)\sum_{a=0}^{n-1}\{J_{1}^{'}(\chi_1,\chi_2,a)\times G'(\chi_1\chi_2,\psi,a)\},\label{label9}
\end{align}
where 
\begin{equation}
 G'(\chi_1\chi_2,\psi,a)=\sum_{u\in \frac{U_F}{U_{F}^{n-a}}}
(\chi_1\chi_2)^{-1}(\pi_{F}^{a}u)\psi(\frac{\pi_{F}^{a}u}{c}),\label{label10}
\end{equation}
and 
\begin{equation}
 J_{1}^{'}(\chi_1,\chi_2,a)=\sum_{s\in\frac{U_F}{U_{F}^{n-a}}}\chi_{1}^{-1}(\frac{s}{\pi_{F}^{a}})
\chi_{2}^{-1}(1-\frac{s}{\pi_{F}^{a}})\label{eqn 4.14}
\end{equation}
In the above calculations, we assume $t=x+y$, where 
both  $x$ and $y$ are in $\frac{U_F}{U_{F}^{n}}$ and this $t$ can be written as 
$t=\pi_{F}^{a}u$, where $a$ varies over $\{0,1,\cdots,n-1\}$ and $u\in \frac{U_F}{U_{F}^{n-a}}$.
Furthermore, by using Lemma \ref{Lemma 3.1}, for $a\neq0$, we have
\begin{equation*}
\sum_{u\in \frac{U_F}{U_{F}^{n-a}}}(\chi_1\chi_2)^{-1}(u)\psi(\frac{\pi_{F}^{a}u}{c})=0.
\end{equation*}
Therefore, for $a\neq0$, we can write
\begin{align*}
G'(\chi_1\chi_2,\psi,a)
&=\sum_{u\in \frac{U_F}{U_{F}^{n-a}}}(\chi_1\chi_2)^{-1}(\pi_{F}^{a}u)\psi(\frac{\pi_{F}^{a}u}{c})\\
&=\chi_1\chi_2(\pi_{F}^{-a})\sum_{u\in \frac{U_F}{U_{F}^{n-a}}}(\chi_1\chi_2)^{-1}(u)\psi(\frac{\pi_{F}^{a}u}{c})\\
&=0.
\end{align*}

Therefore, we have to take $a=0$, because left side of equation (\ref{label9}) is \textbf{nonzero},
therefore $t=u\in \frac{U_F}{U_{F}^{n}}$.
Now, put $a=0$ in equation (\ref{label10}), then  we have from equation (\ref{label9})
\begin{align*}
\epsilon(\chi_1,\psi)\epsilon(\chi_2,\psi)
&=q^{-n}\chi_{1}\chi_{2}(c)J_{1}^{'}(\chi_1\chi_2,a) \sum_{\alpha\in\frac{U_F}{U_{F}^{r}}}(\chi_1\chi_2)^{-1}(\alpha)
\psi(\frac{\alpha}{c})\\
&=q^{-n}\chi_{1}\chi_{2}(c)J_1(\chi_1,\chi_2,n)G(\chi_1\chi_2,\psi,n)\\
&=q^{-\frac{n}{2}}J_1(\chi_1,\chi_2,n)\epsilon(\chi_1\chi_2,\psi).
\end{align*}
Therefore, in this case we have 
\begin{equation}
 \epsilon(\chi_1\chi_2,\psi)=\frac{q^{\frac{n}{2}}\epsilon(\chi_1,\psi)\epsilon(\chi_2,\psi)}{J_1(\chi_1,\chi_2,n)}. 
\end{equation}
\textbf{Case-2: When $n=m>r$.} Like case-1, in this case, it can be showed that $t=x+y\in \frac{U_F}{U_{F}^{n}}$ when 
$x,y\in \frac{U_F}{U_{F}^{n}}$. Since $c_1=c_2$, let $c=c_1=c_2$. In this situation we have:
\begin{align*}
 \epsilon(\chi_1,\psi)\epsilon(\chi_2,\psi)
 &=q^{-n}\chi_{1}\chi_{2}(c)\sum_{x\in\frac{U_F}{U_{F}^{n}}}\chi_{1}^{-1}(x)\psi(x/c)\times 
\sum_{y\in\frac{U_F}{U_{F}^{m}}}\chi_{2}^{-1}(y)\psi(y/c)\\
&=q^{-n}\chi_{1}\chi_{2}(c)\sum_{x, y\in\frac{U_F}{U_{F}^{n}}}\chi_{1}^{-1}(x)\chi_{2}^{-1}(y)\psi(x/c)\psi(y/c)\\
&=q^{-n}\chi_{1}\chi_{2}(c)\sum_{x, y\in\frac{U_F}{U_{F}^{n}}}\chi_{1}^{-1}(x)\chi_{2}^{-1}(y)\psi(\frac{x+y}{c})\\
&=q^{-n}\chi_{1}\chi_{2}(c)\sum_{t,x\in\frac{U_F}{U_{F}^{n}}}\chi_{1}^{-1}(x)\chi_{2}^{-1}(t-x)\psi(\frac{t}{c})\\
&=q^{-n}\chi_{1}\chi_{2}(c)\sum_{t\in \frac{U_F}{U_{F}^{n}}}\{\sum_{x\in\frac{U_F}{U_{F}^{n}}}\chi_{1}^{-1}(x/t)\chi_{2}^{-1}(1-x/t)\}
(\chi_1\chi_2)^{-1}(t)\psi(\frac{t}{c})\\
&=q^{-n}\chi_{1}\chi_{2}(c)\sum_{s=x/t \in\frac{U_F}{U_{F}^{n}}}\chi_{1}^{-1}(s)\chi_{2}^{-1}(1-s)\times\sum_{t\in \frac{U_F}{U_{F}^{n}}}
(\chi_1\chi_2)^{-1}(t)\psi(\frac{t}{c})\\
&=q^{-n}\chi_{1}\chi_{2}(c)J_1(\chi_1,\chi_2,n)\times G(\chi_1\chi_2,\psi,n)\\
&=q^{-n}\chi_{1}\chi_{2}(c)J_1(\chi_1,\chi_2,n)\times q^{n-r}
G(\chi_1\chi_2,\psi,r)\quad\text{using Lemma 4.3}\\
&=q^{-\frac{r}{2}}\chi_{1}\chi_{2}(\pi_{F}^{n-r})J_1(\chi_1,\chi_2,n)
\chi_1\chi_2(\pi_{F}^{r+n(\psi)})q^{-\frac{r}{2}}G(\chi_1\chi_2,\psi,r)\\
&=q^{-\frac{r}{2}}\chi_{1}\chi_{2}(\pi_{F}^{n-r})J_1(\chi_1,\chi_2,n)\epsilon(\chi_1\chi_2,\psi), \quad\text{since $a(\chi_1\chi_2)=r$}.
\end{align*}
Therefore, in this condition we have:
\begin{equation}
 \epsilon(\chi_1\chi_2,\psi)
 =\frac{q^{\frac{r}{2}}\chi_1\chi_2(\pi_{F}^{r-n})\epsilon(\chi_1,\psi)\epsilon(\chi_2,\psi)}{J_1(\chi_1,\chi_2,n)}.
\end{equation}
\textbf{Case-3: When $n=r>m$.} If conductor $a(\chi_1)>a(\chi_2)$, then conductor 
$a(\chi_1\chi_2)=\mathrm{max}(a(\chi_1),a(\chi_2))=a(\chi_1)$. 
Therefore, we are in this situation: $n=r>m$. In this case $c_1$ can be written as $c_1=c_2\pi_{F}^{n-m}$.
If $x,z\in \frac{U_F}{U_{F}^{n}}$, then $x+\pi_{F}^{n-m}z\in \frac{U_F}{U_{F}^{n}}$.
\begin{align*}
 \epsilon(\chi_1,\psi)\epsilon(\chi_2,\psi)
 &=q^{-\frac{m+n}{2}}\chi_1(c_1)\chi_2(c_2)\sum_{x\in\frac{U_F}{U_{F}^{n}}}\chi_{1}^{-1}(x)\psi(x/c_1)\times 
\sum_{z\in\frac{U_F}{U_{F}^{m}}}\chi_{2}^{-1}(z)\psi(z/c_2)\\
&=q^{-\frac{m+n}{2}}\chi_2(\pi_{F}^{m-n})\chi_1\chi_2(c_1)\sum_{x\in\frac{U_F}{U_{F}^{n}}}\chi_{1}^{-1}(x)\psi(x/c_1)\times 
\sum_{z\in\frac{U_F}{U_{F}^{m}}}\chi_{2}^{-1}(z)\psi(\frac{z\pi_{F}^{n-m}}{c_1})\\
&=q^{-\frac{m+n}{2}}\chi_2(\pi_{F}^{m-n})\chi_1\chi_2(c_1)\sum_{x\in\frac{U_F}{U_{F}^{n}}}\chi_{1}^{-1}(x)\psi(x/c_1)\times q^{m-n}
\sum_{z\in\frac{U_F}{U_{F}^{n}}}\chi_{2}^{-1}(z)\psi(\frac{z\pi_{F}^{n-m}}{c_1})\\
&=q^{\frac{m}{2}-\frac{3n}{2}}\chi_2(\pi_{F}^{m-n})\chi_1\chi_2(c_1)\sum_{x, z\in\frac{U_F}{U_{F}^{n}}}\chi_{1}^{-1}(x)\chi_{2}^{-1}(z)\psi(x/c_1)\psi(\frac{z\pi_{F}^{n-m}}{c_1})\\
&=q^{\frac{m}{2}-\frac{3n}{2}}\chi_2(\pi_{F}^{m-n})\chi_1\chi_2(c_1)\sum_{x, z\in\frac{U_F}{U_{F}^{n}}}\chi_{1}^{-1}(x)\chi_{2}^{-1}(z)\psi(\frac{x+z\pi_{F}^{n-m}}{c_1})\\
&=q^{\frac{m}{2}-\frac{3n}{2}}\chi_2(\pi_{F}^{m-n})\chi_1\chi_2(c_1)\chi_2(\pi_{F}^{n-m})\sum_{x, t\in\frac{U_F}{U_{F}^{n}}}\chi_{1}^{-1}(x)\chi_{2}^{-1}(t-x)
\psi(\frac{t}{c_1})\\
&=q^{\frac{m}{2}-\frac{3n}{2}}\chi_1\chi_2(c_1)\sum_{t\in \frac{U_F}{U_{F}^{n}}}\{\sum_{x\in\frac{U_F}{U_{F}^{n}}}\chi_{1}^{-1}(x/t)\chi_{2}^{-1}(1-x/t)\}
(\chi_1\chi_2)^{-1}(t)\psi(\frac{t}{c_1})\\
&=q^{\frac{m}{2}-\frac{3n}{2}}\chi_1\chi_2(c_1)\sum_{s=x/t \in\frac{U_F}{U_{F}^{n}}}\chi_{1}^{-1}(s)\chi_{2}^{-1}(1-s)\times\sum_{t\in \frac{U_F}{U_{F}^{n}}}
(\chi_1\chi_2)^{-1}(t)\psi(\frac{t}{c_1})\\
&=q^{\frac{m}{2}-\frac{3n}{2}}\chi_1\chi_2(c_1)J_1(\chi_1,\chi_2,n)\times \sum_{t\in \frac{U_F}{U_{F}^{n}}}(\chi_1\chi_2)^{-1}(t)\psi(\frac{t}{c_1})\\
&=q^{\frac{m}{2}-n}J_1(\chi_1,\chi_2,n)\chi_1\chi_2(c_1)q^{-\frac{n}{2}}\times G(\chi_1\chi_2,\psi,n)\\
&=q^{\frac{m}{2}-n}J_1(\chi_1,\chi_2,n)\epsilon(\chi_1\chi_2,\psi).
\end{align*}
Therefore we have the formula:
\begin{equation}
\epsilon(\chi_1\chi_2,\psi)=\frac{q^{n-\frac{m}{2}}\epsilon(\chi_1,\psi)\epsilon(\chi_2,\psi)}{J_1(\chi_1,\chi_2,n)}.
\end{equation}
  
\end{proof}

\begin{rem}
 Let $\chi_1, \chi_2$ be two characters of $F^\times$ with conductors $a(\chi_1)=a(\chi_2)=1$. Let $\psi$ be a nontrivial
 additive character of $F$. If the conductor of $\chi_1\chi_2$ is $1$, then from using the above Theorem \ref{Theorem 5.1}
 and equation (\ref{eqn 2.4}) we can say:
 \begin{equation}
  \frac{\epsilon(\chi_1\chi_2,\psi)}{\epsilon(\chi_1,\psi)\epsilon(\chi_2,\psi)}=\gamma,
 \end{equation}
where $\gamma$ is a root of unity for which $J_1(\chi_1,\chi_2,1)=q^{\frac{1}{2}}\cdot\gamma^{-1}$.

Now let $\mu$ be the group of roots of unity which contains $\gamma$. Then for this special case 
(i.e., $a(\chi_1)=a(\chi_2)=a(\chi_1\chi_2)=1$), we can write:
\begin{equation}
 \epsilon(\chi_1\chi_2)\equiv \epsilon(\chi_1,\psi)\cdot\epsilon(\chi_2,\psi)\mod{\mu}.
\end{equation}

\end{rem}

We also observe that our local Jacobi sum $J_1(\chi_1,\chi_2,n)$ is the generalization of the classical 
Jacobi sum. But explicit computation of this local Jacobi sums are difficult. When $n=1=a(\chi_1)=a(\chi_2)$, from equation 
(\ref{eqn 2.4}) we can say that $|J_1(\chi_1,\chi_2,1)|=q^{\frac{1}{2}}$. If we can compute this local 
Jacobi sums explicitly, then by using Theorem \ref{Theorem 5.1}, one can give more 
explicit twisting formula of epsilon factors. 

By using our twisting formula 
(\ref{eqn 4.8}) and Deligne's formula (\ref{eqn 3.26}), for the following case we can give the explicit
formula for local Jacobi sum $J_1(\chi_1,\chi_2,a(\chi_1))$, when $a(\chi_1)>a(\chi_2)=1$.

\begin{prop}
Let $F$ be a non-Archimedean local field with $q$ as the cardinality of the residue field of $F$.
 Let $\chi_1$ be a character of $F^\times$ of conductor $a(\chi_1)>1$. Let $\chi_2$ be a character of $F^\times$ of 
 conductor $a(\chi_2)=1$. Let $\psi$ is an additive character of $F$ of conductor $-1$. Then 
 \begin{equation}
  J_1(\chi_1,\chi_2,a(\chi_1))=q^{a(\chi_1)-1}\cdot\chi_2(y)\cdot G(\chi_{2}^{-1},\psi),
 \end{equation}
where $y=y(\chi_1,\psi)\in F^\times$ such that $\chi_1(1+x)=\psi(yx)$ for all $x\in F$ with valuation $\nu_F(x)\ge \frac{a(\chi_1)}{2}$. 
\end{prop}

\begin{proof}
 For the assumptions, from equation \ref{eqn 4.8} we have 
 \begin{equation}\label{eqn 4.20}
  \epsilon(\chi_1\chi_2,\psi)=\frac{q^{a(\chi_1)-\frac{1}{2}}\epsilon(\chi_1,\psi)\epsilon(\chi_2,\psi)}{J_1(\chi_1,\chi_2,a(\chi_1))}.
 \end{equation}
Since $a(\chi_1)>1=a(\chi_2)$, hence $a(\chi_1)\ge 2\cdot a(\chi_2)$, from the Deligne's formula (\ref{eqn 3.26}) we have 
\begin{equation}\label{eqn 4.21}
 \epsilon(\chi_1\chi_2,\psi)=\chi_{2}^{-1}(y)\cdot\epsilon(\chi_1,\psi),
\end{equation}
where $y=y(\chi_1,\psi)\in F^\times$ such that $\chi_1(1+x)=\psi(yx)$ for all $x\in F$ with valuation $\nu_F(x)\ge \frac{a(\chi_1)}{2}$.

Comparing equations (\ref{eqn 4.20}) and (\ref{eqn 4.21}) we have 
\begin{equation}\label{eqn 4.22}
 J_1(\chi_1,\chi_2,a(\chi_1))=q^{a(\chi_1)-\frac{1}{2}}\cdot\chi_2(y)\cdot\epsilon(\chi_2,\psi).
\end{equation}
Again by the given conditions $a(\chi_2)=1$, and $n(\psi)=-1$, hence $c=\pi_{F}^{a(\chi_2)+n(\psi)}=\pi_{F}^{1-1}=1$. Thus we can write 
\begin{equation}\label{eqn 4.23}
 \epsilon(\chi_2,\psi)=q^{-\frac{1}{2}}\sum_{x\in U_F/U_{F}^{1}}\chi_{2}^{-1}(x)\psi(x)=q^{-\frac{1}{2}}\cdot G(\chi_{2}^{-1},\psi),
\end{equation}
since $\psi|_{O_F}$ is an additive character of $O_F/P_F$.

By using equation (\ref{eqn 4.23}), from equation (\ref{eqn 4.22}) we obtain:
 \begin{equation*}
  J_1(\chi_1,\chi_2,a(\chi_1))=q^{a(\chi_1)-1}\cdot\chi_2(y)\cdot G(\chi_{2}^{-1},\psi).
 \end{equation*}
\end{proof}

\vspace{20mm}
\textbf{Acknowledgements} 
\thispagestyle{empty}
I would like to thank Prof E.-W. Zink for encouraging me to work on local epsilon factors and his constant 
valuable advices. I also express my gratitude to my adviser Prof. Rajat Tandon for his continuous help.
I also thank to the referee for his/her valuable comments and suggestions for the improvement of the article.

\end{document}